\begin{document}
\renewenvironment{proof}{ \noindent {\bfseries Proof.}}{qed}
\newtheorem{defin}{Definition}
\newtheorem{theorem}{Theorem}
\newtheorem{remark}{Remark}
\newtheorem{proposition}{Proposition}
\newtheorem{lemma}{Lemma}
\newtheorem{cor}{Corollary}
\def\begproof{\noindent{\bf Proof: }}
\def\endproof{\quad\vrule height4pt width4pt depth0pt \medskip}
\def\div{\nabla\cdot}
\def\rot{\nabla\times}
\def\sign{{\rm sign}}
\def\arsinh{{\rm arsinh}}
\def\arcosh{{\rm arcosh}}
\def\diag{{\rm diag}}
\def\const{{\rm const}}
\def\eps{\varepsilon}
\def\phi{\varphi}
\def\theta{\vartheta}
\newcommand{\Bchi}{\mbox{$\hspace{0.12em}\shortmid\hspace{-0.62em}\chi$}}
\def\C{\hbox{\rlap{\kern.24em\raise.1ex\hbox
      {\vrule height1.3ex width.9pt}}C}}
\def\R{\mathbb{R}}
\def\P{\hbox{\rlap{I}\kern.16em P}}
\def\Q{\hbox{\rlap{\kern.24em\raise.1ex\hbox
      {\vrule height1.3ex width.9pt}}Q}}
\def\M{\hbox{\rlap{I}\kern.16em\rlap{I}M}}
\def\N{\hbox{\rlap{I}\kern.16em\rlap{I}N}}
\def\Z{\hbox{\rlap{Z}\kern.20em Z}}
\def\K{\mathcal{K}}
\def\({\begin{eqnarray}}
\def\){\end{eqnarray}}
\def\[{\begin{eqnarray*}}
\def\]{\end{eqnarray*}}
\def\part#1#2{{\partial #1\over\partial #2}}
\def\partk#1#2#3{{\partial^#3 #1\over\partial #2^#3}}
\def\mat#1{{D #1\over Dt}}
\def\dt{\, \partial_t}
\def\as{_*}
\def\d{\, {\rm d}}
\def\e{ \text{e}}
\def\D{\mathcal{D}}
\def\feps{f_\eps}
\def\pmb#1{\setbox0=\hbox{$#1$}
  \kern-.025em\copy0\kern-\wd0
  \kern-.05em\copy0\kern-\wd0
  \kern-.025em\raise.0433em\box0 }
\def\bar{\overline}
\def\lbar{\underline}
\def\fref#1{(\ref{#1})}

\begin{center}
{\LARGE Fractional-diffusion-advection limit of a kinetic model}
\bigskip

{\large P. Aceves-S\'anchez}\footnote{Fakult\"at f\"ur Mathematik, Universit\"at Wien.} and
{\large C. Schmeiser}$^1$
\end{center}
\vskip 1cm

\noindent{\bf Abstract.} A fractional diffusion equation with advection term is rigorously derived from a kinetic transport model
with a linear turning operator, featuring a fat-tailed equilibrium distribution and a small directional bias due to a given vector field.
The analysis is based on bounds derived by relative entropy inequalities and on two recently developed approaches for the macroscopic
limit: a Fourier-Laplace transform method for spatially homogeneous data and the so called moment method, based on a modified
test function.

\vskip 1cm

\noindent{\bf Key words:} fractional diffusion, kinetic transport models, macroscopic limit 
\medskip

\noindent{\bf AMS subject classification:} 35R11, 82C99
\vskip 1cm

\noindent{\bf Acknowledgment:} This work has been supported by the PhD program {\em Dissipation and Dispersion
in Nonlinear PDEs} funded by the Austrian Science Fund, grant no. W1245. P.A.S. acknowledges support from Consejo Nacional
de  Ciencia y Tecnologia and from the {\em beca complemento} program of Secreteria de Educacion Publica, Mexico. The 
work of C.S. has also been supported by the Vienna Science and Technology Fund, grant no.  LS13-029.




\section{Introduction}

The goal of this paper is to study the limit as $\eps\to 0$ of the distribution function $\feps(x,v,t)$ (depending on position 
$x\in\R^N$, velocity $v\in\R^N$, and time $t\ge 0$), solving the kinetic Cauchy problem
\begin{equation}\label{kineticeq}
\begin{array}{rcll}
\eps^\alpha \partial_t \feps + \eps v \cdot \nabla_x \feps & = & Q_\eps (\feps) & \text{in } \R^N \times \R^N\times \R^+ \,,\\
\feps (t = 0)  & = & f^{in}             & \text{in } \R^N \times \R^N \,, 
\end{array} 
\end{equation}
where the linear collision operator is given by
\(
  &&Q_\eps(f) = \int_{\R^N} (T_\eps(v'\to v,x,t)f' - T_\eps(v\to v',x,t)f)dv' \,,\label{Qeps}\\
  &&\mbox{with } T_\eps(v'\to v,x,t) = \left(1+ \eps^{\alpha-1}\Phi(v,v',c(x,t))\right) M(v) \,,\nonumber
\)
with the prescribed vector field $c(x,t)\in\R^N$, and with the equilibrium distribution $M(v)$ with the properties
\begin{eqnarray}
 && M>0 \,,\quad M \mbox{ is rotationally symmetric,} \quad \int_{\R^N} M(v)dv = 1 \,,\nonumber\\
 && M(v) = \frac{\gamma}{| v|^{N + \alpha}} \,,\qquad\mbox{for } |v| \geq 1\,,\quad 1<\alpha<2\,,\ \gamma>0 \,.\label{eqdecay}
\end{eqnarray}
The decay property is responsible for the choice of the scaling in \eqref{kineticeq}, which will turn out to be significant in the following.
Note that $M$ has finite first order but not second order moments. As usual in kinetic theory, $f'$ denotes evaluation at $v'$. 

The collision operator can be written as $Q_\eps = Q_0 + \eps^{\alpha-1}Q_1$ with the dominating, directionally unbiased
relaxation operator 
\[
   Q_0(f) = \rho_f M - f \,,\qquad \rho_f := \int f\,dv \,,
\]
and the turning operator
\[
  Q_1(f) = \int [\Phi(v,v',c)M f' - \Phi(v',v,c)M'f]dv' \,,
\]
supposed to bias velocity changes towards the direction given by $c$. Here and in the following, $\d v$, $\d v'$, and $\d x$
denote the Lebesgue measure on $\R^N$ and $\d t$ the Lebesgue measure on $\R^+$, which always have to be understood as the integration domains, except stated otherwise. In the scaling process, the ratio of characteristic times
between the biased and unbiased velocity jump mechanisms has been denoted by $\eps^{\alpha-1}$, 
and then macroscopic length and time scales have been introduced.

A possible motivation for the model \eqref{kineticeq} is the description of ensembles of motile microorganisms, subject to a
chemical signal encoded in the vector field $c$, which might be interpreted as the spatial gradient of a chemo-attractant.
One of the best studied microorganisms is the bacterium \emph{Escherichia coli}, whose 
swimming pattern can be described as a run-and-tumble process \cite{berg2004coli,Berg1972}, characterized by periods of straight running alternated with (much shorter) periods of reorientation (or tumbling). Under the idealizing assumption of instantaneous
velocity jumps, this can be described stochastically by kinetic transport equations, which have been introduced as models for
microorganisms in the pioneering works \cite{alt1980biased} and \cite{othmer1988models}.
In the presence of a chemo-attractant gradient, the velocity jump process is biased, which is described by the function 
$\Phi(v,v^\prime,c)$, depending on the velocities $v^\prime$ and $v$ before and, respectively, after the jump, and on the gradient $c$. 

From a macroscopic point of view (where length and time scales are large compared to individual runs), a standard description of the resulting
motility is by Brownian motion with a drift. On the other hand, the recent progress in tracking individual trajectories
\cite{barkai2012strange,bartumeus2003helical,escudero2006fractional,klafter1990microzooplankton,metzler2000random} allowed to show that the movement of certain microorganisms is better described by a so-called L\'{e}vy flight. In particular, there is evidence that  
\emph{E. coli} adopts a L\'evy flight type movement when there is scarcity of food resources \cite{Wu2006}.
Macroscopically, L\'evy flights show a scaling behavior different from Brownian motion, where the average displacement scales with
the square root of time, a behavior called fractional Brownian motion.
In the model considered here, this kind of behavior is described by a high probability of larger velocities encoded in the fat tail 
\eqref{eqdecay} of the equilibrium distribution $M$.

For an equilibrium distribution $M$ with finite second order moments, the scaling with $\alpha=2$ would be appropriate, and the
macroscopic limit $\eps\to 0$ would lead to a convection diffusion equation for the limit of the macroscopic density $\rho_f$
(see, e.g., \cite{MR2065025}). On the other hand it has been shown in \cite{ben2011anomalous,MR2815035,MR2763032} that with the assumption \fref{eqdecay} and with $Q_1=0$ the macroscopic limit leads to a fractional diffusion equation; see also \cite{Jara2009},  
where this has been carried out via a probabilistic approach. 
Fractional diffusion equations with advection have been the object of intense study in recent years. Issues such as regularity have been
addressed by many authors, most notably by Silvestre and co-workers \cite{silvestre2010differentiability,SilVicolZlatos}. The problem of a rigorous derivation of a fractional diffusion equation with convection from kinetic models has been posed, but left open in \cite{MR2815035}. This is the purpose of the present work.

Two different methods will be used, leading to results with slightly different assumptions on the data. The Laplace-Fourier transform
approach of \cite{MR2763032} can only be used in the case of constant $c$. On the other hand, it requires milder assumptions on 
the turning rate $\Phi$ than the moment method of \cite{MR2815035}. In these works, the coercivity properties of the leading order
collision operator $Q_0$ are the essential ingredient for obtaining estimates uniform with respect to $\eps$. The important 
contribution of the present work is to employ the equilibrium distribution of the full collision operator $Q_\eps$ and a corresponding
entropy dissipation property. The latter holds although detailed balance is not required, as has first been shown in \cite{MR1803225}
and actually is known now as a general result for generators of Markov processes \cite{FontbonaJourdain}, like $Q_\eps$,
which is obviously preserving positivity and conserving mass:
$$
\int Q_\varepsilon (f) \d v = 0 \,.
$$
Fractional diffusion is generated by a fractional power of the Laplacian (\emph{fractional Laplacian}), which can be defined via the 
Fourier transform $\mathcal{ F}$ as a multiplication operator in Fourier coordinates,
\begin{equation}\label{fracLaplacian}
\mathcal{ F}( ( - \Delta)^{\alpha /2} \rho) ( k) : =  |k|^\alpha \mathcal{ F} (\rho) ( k) \,,
\end{equation}
or as a singular integral,
\begin{equation}\label{fracLaplacian2}
( - \Delta)^{ \alpha / 2} \rho( x) = c_{ N, \alpha} \, \, \text{P.V.} \int_{ \R^N} \frac{ \rho( x) - \rho( y)}{ | x - y|^{ N + \alpha}} \d y \,, 
\end{equation}
where P.V. denotes the Cauchy principal value, and 
$$
c_{ N, \alpha} = 
  \Gamma(\alpha+1) \left( \int_{\R^N} \frac{w_1^2 |w|^{-N-\alpha}}{1+w_1^2} \d w\right)^{-1}\,,
$$
with the Gamma function $\Gamma$. The value of $c_{N,\alpha}$ will be verified by our results below. 
Note that for $\alpha>1$ a principal value can be avoided by the equivalent representation
\begin{equation*}
  (-\Delta)^{\alpha/2} \rho(x) = c_{ N, \alpha}  \int_{ \R^N} \frac{\rho(x) - \rho(y) - (x-y)\cdot\nabla_x \rho(x)}{| x - y|^{ N + \alpha}}\d y \,.
\end{equation*}
For a detailed discussion of the properties of the fractional Laplacian consult 
\cite{MR0350027,DiNezza2012521,stein1970singular}. We only note that it is formally self-adjoint, which is a straightforward 
consequence of both representations \eqref{fracLaplacian} and \eqref{fracLaplacian2}.

The main result of this work is the rigorous validity of the macroscopic limit $\eps\to 0$:

\begin{theorem}\label{Theorem1}
Let $f^{in}\in L^2(\d v\d x/M)$, $(1+|v|)f^{in} \in L_+^1(\d v\,\d x)$, and either
\begin{eqnarray*}
  \mbox{\bf Assumption A: } && c={\rm const}\in\R^N \,,\quad (1+|v| + |v^\prime|)\Phi \mbox{ is bounded, or}\\
  \mbox{\bf Assumption B: } && c={\rm const}\in\R^N \,,\quad \Phi \mbox{ is bounded,} \quad \int \Phi(v',v,c)M'dv'=0 \,, 
      \mbox{ or} \\
  \mbox{\bf Assumption C: } && c\in W^{1,\infty}(\d x\d t)^N \,,\quad (1+|v| + |v^\prime|)\Phi \mbox{ is bounded} \\ 
  && \mbox{and Lipschitz continuous with respect to } c\,,
\end{eqnarray*}
Then there exists 
$\rho\in L_{loc}^\infty(\d t; L^2(\d x))$, such that the solution $f_\eps$ of \fref{kineticeq} converges, as $\eps\to 0$, to
$\rho M$ in $L_{loc}^\infty(\d t; L^2(dv\,dx/M))$ weak*, and $\rho$ solves in the distributional sense the Cauchy
problem
\(\label{macro}
\begin{array}{l}
  \partial_t\rho + \nabla_x\cdot (\rho u(c)) + A (-\Delta)^{\alpha/2} \rho = 0 \,,\\
   \rho(t=0) = \rho^{in} := \int f^{in}dv \,.
\end{array}
\)
with
$$
  A = \gamma\int_{\R^N} \frac{w_1^2 |w|^{-N-\alpha}}{1+w_1^2} \d w \,,\qquad u(c) = \int Q_1(M) v \d v \,.
$$
\end{theorem}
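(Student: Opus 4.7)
The plan is to proceed in three stages: derive uniform a priori bounds via an entropy dissipation argument using the equilibrium of the full operator $Q_\eps$, extract weak-$*$ limits, and then identify the limit equation by the Fourier--Laplace method under Assumptions~A or B and by a moment method with a modified test function under Assumption~C.

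First I would establish uniform bounds. For frozen $c$, the operator $Q_\eps$ is a positivity-preserving, mass-conserving Markov generator and, by the result of Fontbona--Jourdain cited in the introduction, admits a unique normalized equilibrium $F_\eps(v;c)$ with a dissipated quadratic relative entropy. Writing $Q_\eps = Q_0 + \eps^{\alpha-1}Q_1$ and using the spectral gap of $Q_0$ on $M^\perp$, one expects $F_\eps = M + O(\eps^{\alpha-1})$, so that applying the entropy identity to \eqref{kineticeq} and absorbing the lower-order corrections yields
\[
\|f_\eps\|_{L^\infty(\d t; L^2(\d v\d x/M))} \leq C, \qquad \|f_\eps - \rho_\eps M\|_{L^2(\d t\d v\d x/M)}^2 \leq C\eps^\alpha.
\]
Combined with the conservation of the $L^1$-norm, these provide weak-$*$ compactness in $L^\infty(\d t;L^2(\d v\d x/M))$, with any weak-$*$ limit point $g$ of $f_\eps$ necessarily of the form $g=\rho M$ for some $\rho\in L^\infty(\d t; L^2(\d x))$.

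Under Assumption~A or B, I would identify the equation for $\rho$ by Fourier transform in $x$ and Laplace transform in $t$, following \cite{MR2763032}. Solving the transformed equation algebraically gives
\[
\hat f_\eps(k,v,p) = \frac{\hat f^{in}(k,v) + M(v)\hat\rho_\eps(k,p) + \eps^{\alpha-1}\widehat{Q_1(f_\eps)}(k,v,p)}{\eps^\alpha p + i\eps v\cdot k + 1},
\]
and integration in $v$ yields a closed algebraic equation for $\hat\rho_\eps$. The central computation is
\[
\lim_{\eps\to 0}\frac{1}{\eps^\alpha}\left(1 - \int\frac{M(v)\,\d v}{1 + i\eps v\cdot k}\right) = A|k|^\alpha,
\]
obtained via the change of variables $w=\eps v$ and the tail \eqref{eqdecay}; the $Q_1$ contribution produces $ik\cdot u(c)\,\hat\rho$. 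Assumption~A provides the weighted boundedness of $\Phi$ needed for the drift moment to exist pointwise, while Assumption~B uses the centering identity $\int\Phi(v',v,c)M'\d v' = 0$ to achieve the same control without the stronger weight.

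Under Assumption~C the Fourier method fails and I would use the moment method of \cite{MR2815035} with a modified test function $\psi_\eps = \phi + \chi_\eps$, where $\phi\in C_c^\infty(\R^N\times\R^+)$ is the target and $\chi_\eps$ solves an auxiliary problem of the form $\chi_\eps + \eps v\cdot\nabla_x\chi_\eps = -\eps v\cdot\nabla_x\phi$, explicitly $\chi_\eps(x,v,t) = -\int_0^\infty e^{-s}\eps v\cdot\nabla_x\phi(x-\eps sv,t)\d s$. This choice cancels the dominant transport term in the weak formulation and leaves a non-local remainder whose limit, after replacing $f_\eps$ by $\rho_\eps M$ (justified by the entropy bound), equals $\int\rho\,A(-\Delta)^{\alpha/2}\phi\,\d x\d t$ by the same tail computation, while the $\eps^{\alpha-1}Q_1^*\phi$ contribution produces the drift and the Lipschitz dependence of $\Phi$ on $c$ together with the $W^{1,\infty}$ regularity of $c$ controls the commutator-type errors. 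The main obstacle in either method is the rigorous passage to the fractional Laplacian: because the second moment of $M$ diverges, Taylor expansion in $\eps v\cdot k$ is forbidden and one must keep denominators (or integral kernels) intact under the rescaling $w=\eps v$ in order to expose the anomalous scaling. A parallel subtlety is that the entropy bound must be built on the $\eps$-dependent equilibrium $F_\eps$ of $Q_\eps$ rather than on $M$, with the specific structure of $Q_1$ encoded by Assumption~A, B, or C entering precisely through the control of $F_\eps - M$.
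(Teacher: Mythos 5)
Your overall strategy is the same as the paper's (entropy/coercivity bounds built on the equilibrium $F_\eps$ of the full operator $Q_\eps$, Fourier--Laplace identification under Assumptions A/B, moment method with a modified test function under Assumption C), but there is a concrete error in the moment-method step. The weak formulation produces the combination $\int f_\eps\,(\psi-\eps v\cdot\nabla_x\psi)\,\d v\,\d x$, i.e.\ the relevant adjoint of the transport-plus-loss operator is $\psi\mapsto\psi-\eps v\cdot\nabla_x\psi$; hence the auxiliary problem must be $\chi_\eps-\eps v\cdot\nabla_x\chi_\eps=\phi$, with $\chi_\eps(x,v,t)=\int_0^\infty e^{-z}\phi(x+\eps vz,t)\,\d z$, as in the paper. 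Your test function solves $\psi_\eps+\eps v\cdot\nabla_x\psi_\eps=\phi$ (equivalently $\psi_\eps=\int_0^\infty e^{-z}\phi(x-\eps vz,t)\,\d z$), for which $\psi_\eps-\eps v\cdot\nabla_x\psi_\eps=\phi-2\eps v\cdot\nabla_x\psi_\eps$: the transport term is not cancelled but doubled. The leftover $2\eps^{1-\alpha}\int f_\eps\, v\cdot\nabla_x\psi_\eps\,\d v$ is not negligible --- its $\rho_\eps M$ part is of order one (it equals $-2\eps^{-\alpha}\rho_\eps\int M(\psi_\eps-\phi)\d v$ up to sign and would flip the sign of the fractional diffusion in the limit), and its $r_\eps$ part cannot be estimated in $L^2(M\d v)$ since $|v\cdot\nabla_x\psi_\eps|\le|v|\,\|\nabla\phi\|_\infty$ and $M$ has no second moments. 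So as written the Assumption C argument fails; flipping the sign in the auxiliary equation restores the paper's proof.

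Two further inaccuracies are worth noting, though they do not break the argument. First, the entropy bound gives $\|f_\eps-\rho_\eps F_\eps\|^2_{L^2(\d t\,\d v\,\d x/F_\eps)}\le C\eps^\alpha$, but only relative to $F_\eps$; since $|F_\eps-M|\le\eps^{\alpha-1}\mu_3 M$ with $\alpha-1<\alpha/2$, what follows for the quantity you actually use is $\|f_\eps-\rho_\eps M\|_{L^2}=O(\eps^{\alpha-1})$ (the paper's bound on $r_\eps$), not $O(\eps^{\alpha/2})$ as you claim; the weaker rate still suffices because the $Q_1$ remainder carries an extra prefactor and $\alpha>1$, and under Assumption C the bounds only hold with an exponential-in-time weight since $F_\eps$ depends on $(x,t)$. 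Second, for general $\Phi$ integration in $v$ does \emph{not} yield a closed equation for $\widehat{\rho_\eps}$ (that happens only for the model kernel of Section 2): the term $\eps^{-1}\widehat{Q_1(f_\eps)}$ must be handled by using $\int Q_1(f)\,\d v=0$ to gain a factor $ik\eps$, then splitting $\widehat{f_\eps}=\widehat{\rho_\eps}M+\eps^{\alpha-1}\widehat{r_\eps}$ and invoking the bound on $r_\eps$ (this is exactly where Assumption A, resp.\ the centering in Assumption B, is needed); also the initial-datum term in your transformed equation should carry the factor $\eps^\alpha$. Your central limit computation for $A|k|^\alpha$ and the identification of the drift $u(c)$ are otherwise consistent with the paper.
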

The main parts of the proof will be given in Sections 4 (Assumptions A and B) and 5 (Assumption C), after presentation of the formal
macroscopic limit for a simple model problem in Section 2, and the derivation of several uniform (in the small parameter $\eps$) 
bounds on the solution of \eqref{kineticeq} in Section 3.




\section{Formal asymptotics of a simple model}\label{sec:formal}

In this section the Cauchy problem \eqref{kineticeq}, \eqref{Qeps} is considered with constant $c\in\R^N$ and with the turning kernel
\begin{equation}\label{collsimple}
 \Phi(v,v^\prime,c) =  c \cdot \frac{ v }{ | v|} \,. 
\end{equation}
This means that the turning rate is independent of the incoming velocity $v^\prime$ and prefers outgoing velocities $v$ 
in the direction of $c$. The Fourier-Laplace approach to the macroscopic limit will be carried out formally, deferring a rigorous 
justification for the general form of the turning kernel satisfying Assumption A or B of Theorem \ref{Theorem1} to Section 6. 
Note that \eqref{collsimple} does not satisfy Assumption A, but Assumption B by its oddness with respect to the first variable. 

We introduce the Fourier transformation $\mathcal{F}$ with respect to $x$ and the Laplace transformation $\mathcal{L}$ with respect 
to $t$,
$$
  (\mathcal{F} f)(k) := \int \e^{ -i k \cdot x} f(x) \d x \,,\qquad
  (\mathcal{L} f)(p) := \int_{0}^{\infty} \e^{-pt } f(t) \d t \,,\qquad  p > 0 \,, \quad k \in \R^N \,,
$$
and define the Fourier-Laplace transform
\begin{equation*}
\widehat{f_\varepsilon} := \mathcal{L}\mathcal{F}f_\varepsilon  \,.
\end{equation*}
Taking the Fourier-Laplace transform of \eqref{kineticeq} with the turning kernel \eqref{collsimple} yields
\begin{align}
\varepsilon^\alpha p \widehat{ f_{\varepsilon}} - \varepsilon^{\alpha} \mathcal{F}f^{in} + \varepsilon i v \cdot k \widehat{f_{\varepsilon}}  =  M \left( 1 + \varepsilon^{\alpha - 1} c\cdot v/|v|\right) \widehat{\rho_{\varepsilon}} - \widehat{ f_{\varepsilon}} \,, \label{kinetic3} 
\end{align}
with $\rho_\eps := \rho_{f_\eps}$, where the evenness of $M$ has been used. This can be rewritten as
\begin{equation}
\widehat{ f_\varepsilon} = \frac{\varepsilon^\alpha \mathcal{F} f^{in}}{1 + \varepsilon^\alpha p + \varepsilon i v \cdot k} + \frac{M \left( 1 + \varepsilon^{\alpha - 1} c\cdot v/|v|\right) \widehat{\rho_{\varepsilon}}}{1 + \varepsilon^\alpha p + \varepsilon i v \cdot k}  \,.
\end{equation}
Integration with respect to $v$ leads to a closed equation for $\widehat{\rho_{\varepsilon}}$ (a consequence of the simple form of the
model problem), which can be written in the form
\[
  &&\Biggl( \int \frac{\eps^\alpha p + \eps^{2\alpha}p^2 + \eps^2 (v\cdot k)^2 + \eps i v\cdot k}
  {(1+\eps^\alpha p)^2 + \eps^2 (v\cdot k)^2}M\d v \\
  &&  - \int \frac{\eps^{\alpha-1}c\cdot v/|v|(1+\eps^\alpha p - \eps i v\cdot k)}
  {(1+\eps^\alpha p)^2 + \eps^2 (v\cdot k)^2}M\d v \Biggr)\widehat{\rho_{\varepsilon}} 
  = \int \frac{\varepsilon^\alpha \mathcal{F} f^{in}}{1 + \varepsilon^\alpha p + \varepsilon i v \cdot k} \d v
\]
Again by the evenness of $M$, the imaginary part of the first integral and the real part of the second integral on the left hand side
vanish. Therefore, division by $\eps^\alpha$ gives
\(
  &&\left( \int \frac{p + \eps^{\alpha}p^2 + \eps^{2-\alpha} (v\cdot k)^2} {(1+\eps^\alpha p)^2 + \eps^2 (v\cdot k)^2}M\d v 
   + i\int \frac{c\cdot v/|v|(v\cdot k)} {(1+\eps^\alpha p)^2 + \eps^2 (v\cdot k)^2}M\d v \right)\widehat{\rho_{\varepsilon}}
       \nonumber\\
  && = \int \frac{\mathcal{F} f^{in}}{1 + \varepsilon^\alpha p + \varepsilon i v \cdot k} \d v \label{rho_eps}
\)
Now we are prepared for formally passing to the limit, which is easy for all terms except one (because of the nonexistence of second
order moments of $M$):
$$
   \int \frac{\eps^{2-\alpha} (v\cdot k)^2}
  {(1+\eps^\alpha p)^2 + \eps^2 (v\cdot k)^2}M\d v = O(\eps^{2-\alpha}) + \eps^{2-\alpha} \gamma\int_{|v|>1} 
  \frac{ (v\cdot k)^2 |v|^{-N-\alpha}}{(1+\eps^\alpha p)^2 + \eps^2 (v\cdot k)^2}\d v
$$
With the coordinate transformation $v = (w_1 k/|k| + w^\bot)/(\eps |k|)$ (a stretching and a rotation), it becomes clear that
the right hand side converges as $\eps\to 0$ to 
$$
  A |k|^\alpha \,,\qquad\mbox{with } A = \gamma\int_{\R^N} \frac{w_1^2 |w|^{-N-\alpha}}{1+w_1^2} \d w > 0 \,.
$$
For the computation of the limit of the imaginary term in \eqref{rho_eps}, the rotational symmetry of $M$ is used. For the 
formal limit $\rho$ of $\rho_\eps$ we obtain
$$
  \left(p + A|k|^\alpha + i B c\cdot k \right)\widehat{\rho} = \mathcal{F}\rho^{in} = \int \mathcal{F}f^{in} \d v \,,\qquad
  \mbox{with } B = \frac{1}{N} \int |v| M(v)\d v > 0 \,.
$$
This is the Fourier-Laplace transformed version of the Cauchy problem
$$
  \partial_t \rho + \nabla_x\cdot (\rho Bc) + A(-\Delta)^{\alpha/2}\rho = 0 \,,\qquad \rho(t=0) = \rho^{in} \,,
$$
i.e. \eqref{macro} with 
$$
  u(c) = Bc = \int Q_1(M)v\,\d v \,,\qquad Q_1(M) = c\cdot \frac{v}{|v|}M \,.
$$

\section{Uniform bounds}

The derivation of bounds uniform in the small parameter $\eps$ is based on the equilibrium distribution $F_\eps(v;x,t)$ of the full
collision operator $Q_\eps = Q_0 + \eps^{\alpha-1}Q_1$, defined as solution of the problem
\(\label{F-eps}
  Q_\eps(F_\eps) = 0 \,,\qquad \int F_\eps \d v = 1 \,.
\)
In this problem, $x$ and $t$ play the role of parameters, present through the dependence of $Q_1$ on the vector field $c(x,t)$.

\begin{lemma}
Let the assumptions of Theorem \ref{Theorem1} hold. Then, for $\eps>0$ small enough, the problem \eqref{F-eps} has a unique
solution $F_\eps$ satisfying
\(\label{F-eps-est}
  \frac{1-\eps^{\alpha-1}\overline\Phi}{1+\eps^{\alpha-1}\overline\Phi} \le \frac{F_\eps}{M} \le 
  \frac{1+\eps^{\alpha-1}\overline\Phi}{1-\eps^{\alpha-1}\overline\Phi} \,,
\)
where $\overline\Phi$ is an upper bound for the modulus $|\Phi|$ of the turning kernel. Furthermore, 
\(\label{F-eps-deriv}
  \left| \frac{\partial_t F_\eps}{F_\eps} \right| , \, \left| \frac{v\cdot\nabla_x F_\eps}{F_\eps} \right| \le \eps^{\alpha-1} \lambda \,,
\)
with the constant $\lambda$ independent of $\eps$, and $\lambda=0$ under Assumptions A and B.
\end{lemma}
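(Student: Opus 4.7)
My plan is to recast the equilibrium problem \eqref{F-eps} as a linear integral equation for $g_\eps := F_\eps/M$. Expanding $Q_0(F_\eps) + \eps^{\alpha-1}Q_1(F_\eps) = 0$, imposing $\rho_{F_\eps} = 1$, and dividing by $M(v)$ produces
\begin{equation*}
  (I + \eps^{\alpha-1}K_c)\, g_\eps = 1 \,, \qquad K_c g(v) := \Psi(v) g(v) - \int \Phi(v,v',c) M(v') g(v') \d v' \,,
\end{equation*}
with $\Psi(v) := \int \Phi(v',v,c) M(v') \d v'$. Since $\|K_c\|_{L^\infty \to L^\infty} \le 2\overline\Phi$, the operator $I + \eps^{\alpha-1}K_c$ is invertible by Neumann series as soon as $2\eps^{\alpha-1}\overline\Phi < 1$, furnishing a unique $g_\eps \in L^\infty$. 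Testing the equation against $M(v)\d v$ and exchanging $v \leftrightarrow v'$ in the double integral gives $\int M\, K_c g_\eps \d v = 0$, so the normalization $\int M g_\eps \d v = 1$ is automatic and $F_\eps := M g_\eps$ is a genuine probability density.

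The two-sided estimate \eqref{F-eps-est} then follows from rewriting the equation explicitly as
\begin{equation*}
  \frac{F_\eps(v)}{M(v)} = \frac{1 + \eps^{\alpha-1}\int \Phi(v,v',c)\, F_\eps(v') \d v'}{1 + \eps^{\alpha-1}\Psi(v)} \,.
\end{equation*}
Using $\int F_\eps \d v' = 1$ together with $|\Phi|, |\Psi| \le \overline\Phi$ places numerator and denominator in $[1-\eps^{\alpha-1}\overline\Phi,\, 1+\eps^{\alpha-1}\overline\Phi]$, so \eqref{F-eps-est} drops out directly (and in particular $F_\eps > 0$).

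For the derivative bounds \eqref{F-eps-deriv}, Assumptions A and B trivialise the claim, because $c$ is constant forces $F_\eps$ to be independent of $(x,t)$, so $\lambda=0$. Under Assumption C the $(x,t)$-dependence enters only through $c(x,t)$, and differentiating the fixed-point equation in any $\partial \in \{\partial_t, \partial_{x_i}\}$ gives
\begin{equation*}
  (I + \eps^{\alpha-1} K_c)(\partial g_\eps) = -\eps^{\alpha-1}(\partial K_c) g_\eps \,, \qquad (\partial K_c) g := (\partial \Psi)\, g - \int (\partial \Phi)\, M g \d v' \,.
\end{equation*}
Lipschitz continuity of $(1+|v|+|v'|)\Phi$ in $c$ combined with $c \in W^{1,\infty}$ yields $|\partial \Phi|,\, |\partial \Psi| \le C/(1+|v|)$, hence $|(\partial K_c) g_\eps(v)| \le C/(1+|v|)$. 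Inverting $I + \eps^{\alpha-1} K_c$ in the weighted space with norm $\|h\|_\ast := \sup_v (1+|v|)|h(v)|$ — still via Neumann series, because the $(1+|v|)$-decay of $\Phi$ and $\Psi$ keeps $K_c$ bounded on this space — produces $(1+|v|)|\partial g_\eps(v)| \le C\eps^{\alpha-1}$. Combined with the strictly positive lower bound for $g_\eps$ from \eqref{F-eps-est}, this delivers both $|\partial_t F_\eps|/F_\eps$ and $|v\cdot\nabla_x F_\eps|/F_\eps$ bounded by $\eps^{\alpha-1}\lambda$.

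The main obstacle is precisely this last weighted estimate: a plain $L^\infty$ bound on $\nabla_x g_\eps$ would only yield $|v\cdot\nabla_x g_\eps| = O(\eps^{\alpha-1}|v|)$, which is useless for the claim. Overcoming this requires extracting the $(1+|v|)^{-1}$ decay from $\Phi$ and $\Psi$ and inverting the collision-type operator in the corresponding weighted norm — exactly what the stronger hypothesis in Assumption C (namely $(1+|v|+|v'|)\Phi$ bounded and Lipschitz in $c$) is tailored to permit.
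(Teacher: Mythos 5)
Your proposal is correct and takes essentially the same route as the paper: a fixed-point/Neumann-series formulation for $F_\eps/M$ in $L^\infty$ yielding existence, uniqueness and the two-sided bound, followed by differentiating that equation in $t$ and $x$ under Assumption C and exploiting the boundedness of $(1+|v|+|v'|)\Phi$ and of $(1+|v|+|v'|)\nabla_c\Phi$ to control $v\cdot\nabla_x F_\eps/F_\eps$. Your inversion of $I+\eps^{\alpha-1}K_c$ in the $(1+|v|)$-weighted norm is just a repackaging of the paper's step of multiplying the equation for $\nabla_x(F_\eps/M)$ by $v$ and contracting again, so no substantive difference.
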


\begin{proof}
The existence and uniqueness result follows from the appendix of \cite{MR1803225} or from \cite{MR0423039}, but it can also be
easily derived by contraction, using the fixed point formulation for $G_\eps = \frac{F_\eps}{M} \in L^\infty(\d v)$.
$$
   G_\eps(v) = \frac{1+\eps^{\alpha-1}\int \Phi(v,v^\prime,c)M^\prime G_\eps^\prime \d v^\prime}
     {1+\eps^{\alpha-1}\int \Phi(v^\prime,v,c)M^\prime \d v^\prime} 
     = \eps^{\alpha-1}{\cal F}[G_\eps](v) + \frac{1}{1+\eps^{\alpha-1}\int \Phi(v^\prime,v,c)M^\prime \d v^\prime}  \,,
$$
which implies the estimates \eqref{F-eps-est} in a straightforward way (using the normalization of $M$ and $F_\eps$).

For the time derivative, we get
$$
  \partial_t G_\eps = \eps^{\alpha-1} \left( {\cal F}[\partial_t G_\eps] + \frac{\left( G_\eps\int \nabla_c \Phi(v^\prime,v,c)M^\prime 
  \d v^\prime + \int \nabla_c \Phi(v,v^\prime,c)M^\prime G_\eps^\prime \d v^\prime\right)\cdot\partial_t c}
  {1+\eps^{\alpha-1}\int \Phi(v^\prime,v,c)M^\prime \d v^\prime} \right) \,,
$$
which is of course only relevant in the case of Assumption C in Theorem \ref{Theorem1}. As a consequence of this assumption 
(boundedness of $\nabla_c \Phi$ and of $\partial_t c$) and
of the uniform $L^\infty$ bound \eqref{F-eps-est} for $G_\eps$, the inhomogeneity is $O(\eps^{\alpha-1})$, uniformly in $(x,t,v)$.
This implies, again by contraction, a uniform $L^\infty$ bound of $O(\eps^{\alpha-1})$ for $\partial_t G_\eps$, giving the bound on
$\partial_t F_\eps/F_\eps$ in \eqref{F-eps-deriv}, again as a consequence of \eqref{F-eps-est}.

Analogously, the components of $\nabla_x G_\eps$ are shown to be $O(\eps^{\alpha-1})$. Finally, multiplication of the equation
for $\nabla_x G_\eps$ by $v$ and using the boundedness of $(|v| + |v^\prime|)\Phi$ and $(|v| + |v^\prime|)\nabla_c\Phi$
leads to an $O(\eps^{\alpha-1})$ bound on $v\cdot\nabla_x G_\eps$ and therefore also for $v\cdot\nabla_x F_\eps/F_\eps$.
\end{proof}

\begin{remark}
As consequences of \eqref{F-eps-est},
\begin{itemize}
 \item[ \emph{ (a)} ] $\displaystyle \mu_1 M \leq F_\varepsilon \leq \mu_2 M$,
 \item[ \emph{ (b)} ] $\displaystyle | F_\varepsilon - M | \leq  \varepsilon^{ \alpha - 1} \mu_3 M$,
\end{itemize}
hold with $\eps$-independent constants $\mu_1,\mu_2,\mu_3$, which has already been used in the above proof and will be used
in the following. 
\end{remark}

Entropy decay properties for collision operators with detailed balance have been a classical tool in kinetic theory. 
The detailed balance assumption has been dispensed with in \cite{MR1803225} (see also \cite{FontbonaJourdain}), where a proof of 
the following result can be found.

\begin{lemma}\label{coercivitylemma}
Let the assumptions of Theorem \ref{Theorem1} hold and let $\eps$ be small enough such that $1+\eps^{\alpha-1}\Phi \ge \nu\mu_2>0$.
Then the collision operator $Q_\eps$ satisfies the coercivity inequality
\begin{equation}\label{coercivityIneq}
- \int Q_\eps(f) \frac{f}{F_\eps} \d v \geq \nu
  \| f - \rho_f F_\eps \|^2_{L^2 (\d v/F_\eps)} \qquad \text{ for all } f \in L^2(\d v/F_\eps) \,.
\end{equation}
\end{lemma}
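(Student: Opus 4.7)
The plan is to prove an identity expressing $-\int Q_\eps(f)(f/F_\eps)\,\d v$ as a nonnegative quadratic form, in analogy with the Dirichlet form of a reversible Markov generator, and then bound the resulting kernel from below. First, using $Q_\eps(F_\eps)=0$ in the equivalent form
\begin{equation*}
\int T_\eps(v'\to v,x,t)\,F_\eps(v')\,\d v' \;=\; F_\eps(v)\,\sigma_\eps(v), \qquad \sigma_\eps(v):=\int T_\eps(v\to v',x,t)\,\d v',
\end{equation*}
I would rewrite the operator in carré-du-champ form
\begin{equation*}
Q_\eps(f)(v) = \int T_\eps(v'\to v)\,F_\eps(v')\bigl(h(v')-h(v)\bigr)\,\d v', \qquad h:=f/F_\eps.
\end{equation*}

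Multiplying by $h$, integrating in $v$, and averaging with the version obtained by swapping $v\leftrightarrow v'$ produces, with the shorthand $A:=T_\eps(v'\to v)F_\eps(v')$ and $B:=T_\eps(v\to v')F_\eps(v)$, the decomposition
\begin{equation*}
-\int Q_\eps(f)\frac{f}{F_\eps}\,\d v = \frac{1}{4}\iint(A+B)(h-h')^2\,\d v'\,\d v + \frac{1}{4}\iint(A-B)\bigl(h^2-(h')^2\bigr)\,\d v'\,\d v.
\end{equation*}
The step I expect to be the main obstacle is that without detailed balance the second, sign-indefinite integral is not visibly nonnegative. It is handled as follows: integrating $(A-B)$ in $v'$ or in $v$ reduces, via the equilibrium identity above, to $F_\eps\sigma_\eps-F_\eps\sigma_\eps=0$, so the antisymmetric residual vanishes identically and only the manifestly nonnegative symmetric part survives.

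It then remains to bound the symmetric kernel from below. The hypothesis $1+\eps^{\alpha-1}\Phi\geq\nu\mu_2$ gives $T_\eps(v'\to v)\geq\nu\mu_2 M(v)$, while the bound $F_\eps\leq\mu_2 M$ from the preceding Remark gives $M\geq F_\eps/\mu_2$, so that both $A$ and $B$ are at least $\nu F_\eps(v)F_\eps(v')$ pointwise. Substitution yields
\begin{equation*}
-\int Q_\eps(f)\frac{f}{F_\eps}\,\d v \;\geq\; \frac{\nu}{2}\iint F_\eps(v)F_\eps(v')(h-h')^2\,\d v'\,\d v,
\end{equation*}
which, upon expanding the square and using $\int F_\eps\,\d v=1$ together with $\int F_\eps h\,\d v=\rho_f$, collapses to $\nu\bigl(\|f\|_{L^2(\d v/F_\eps)}^2-\rho_f^2\bigr)=\nu\,\|f-\rho_f F_\eps\|_{L^2(\d v/F_\eps)}^2$, as required.
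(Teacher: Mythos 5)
Your argument is correct: the rewriting of $Q_\eps$ as a jump form in $h=f/F_\eps$, the symmetrization, the observation that the antisymmetric part vanishes because $\int (A-B)\,\d v'=0$ and $\int(A-B)\,\d v=0$ are exactly the equilibrium relation $Q_\eps(F_\eps)=0$, and the pointwise bound $A,B\ge \nu F_\eps F_\eps'$ from $T_\eps(v'\to v)\ge\nu\mu_2 M(v)\ge\nu F_\eps(v)$ all check out, and the final expansion indeed gives $\nu\|f-\rho_f F_\eps\|^2_{L^2(\d v/F_\eps)}$. The paper itself does not prove this lemma but refers to \cite{MR1803225} (see also \cite{FontbonaJourdain}), and your proof is essentially the standard Dirichlet-form argument without detailed balance given there, so there is nothing to object to beyond the routine integrability remarks needed to justify splitting the double integrals for general $f\in L^2(\d v/F_\eps)$ (which follow from the boundedness of $\Phi$ and $F_\eps\sim M$).
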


The existence and uniqueness of a nonnegative solution $f_\eps$ of \eqref{kineticeq} for small enough $\eps$ is a classical result 
of kinetic theory and will be assumed here. The coercivity result Lemma \ref{coercivitylemma} will be used for the derivation of
bounds for the solution. The dependence of the equilibrium distribution $F_\eps$ on $x$ and $t$ destroys entropy
decay, but fortunately uniform bounds on finite time intervals will still be possible.

Lemma \ref{coercivitylemma} suggests the use of $L^2$-norms with weight $1/F_\eps$:
\[
  \frac{\eps^\alpha}{2} \frac{\d}{\d t} \|f_\eps\|^2_{L^2(dv\d x/F_\eps)} &=& - \frac{\eps^\alpha}{2}\int\int 
    \frac{\partial_t F_\eps}{F_\eps} \frac{f_\eps^2}{F_\eps}\d v\d x + \frac{\eps}{2}\int\int
    \frac{v\cdot\nabla_x F_\eps}{F_\eps} \frac{f_\eps^2}{F_\eps}\d v\d x \\
&& + \int\int Q_\eps(f_\eps)\frac{f_\eps}{F_\eps} \d v\d x \,,
\]
where the second term on the right hand side is the result of an integration by parts. Now we use \eqref{F-eps-deriv} and
\eqref{coercivityIneq}:
\(\label{L2-est}
  \frac{\eps^\alpha}{2} \frac{\d}{\d t} \|f_\eps\|^2_{L^2(\d v\d x/F_\eps)} \le  \eps^\alpha \lambda \|f_\eps\|^2_{L^2(\d v\d x/F_\eps)} 
  - \nu \| f_\eps - \rho_\eps F_\eps \|^2_{L^2 (\d v\d x/F_\eps)}\,,
\)

\begin{theorem}\label{apriorieq1}
Let the assumptions of Theorem \ref{Theorem1} hold. Then for small enough $\eps>0$
\begin{itemize}
 \item[ \emph{(i)} ] $f_\eps$ is uniformly (with respect to $\eps$) bounded in $L^\infty \left( \d t; L^1(\d v\d x)\right)$ and in \\ $L^\infty\left( e^{-\lambda t}\d t; L^2 (\d v \d x/M)\right)$ with $\lambda$ from \eqref{L2-est}, vanishing under Assumptions A and B,
 \item[ \emph{(ii)} ]$\rho_\eps$ is uniformly bounded in $L^\infty \left( e^{-\lambda t}\d t; L^2 (\d x)\right)$ and 
 in $L^\infty \left( \d t; L^1(\d x)\right)$,
 \item[ \emph{(iii)} ] $r_\eps := \eps^{1-\alpha}(f_\eps - \rho_\eps M)$ is uniformly bounded in 
 $L^2\left(e^{-2\lambda t}\d t; L^2( \d v\d x/M)\right)$.
\end{itemize}
\end{theorem}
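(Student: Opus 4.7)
The $L^1$-bounds in (i) and (ii) are the easy part: conservation of mass, obtained by integrating \eqref{kineticeq} against $1$ and using $\int Q_\eps(f)\,\d v=0$ together with positivity of $\feps$, gives $\|\feps(t)\|_{L^1(\d v\d x)}=\|f^{in}\|_{L^1(\d v\d x)}$, and Fubini then yields $\|\rho_\eps(t)\|_{L^1(\d x)}\le\|\feps(t)\|_{L^1(\d v\d x)}$.

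For the weighted $L^2$-bound in (i), I would simply drop the nonnegative dissipation term in \eqref{L2-est} and apply Gronwall's inequality to $E_\eps(t):=\|\feps(t)\|^2_{L^2(\d v\d x/F_\eps)}$, which reduces to $E_\eps'(t)\le 2\lambda E_\eps(t)$ and thus gives $E_\eps(t)\le E_\eps(0)\,e^{2\lambda t}$. The uniform equivalence $F_\eps\sim M$ from Remark (a) transfers this bound to the $L^2(\d v\d x/M)$ norm, which is precisely the stated $L^\infty(e^{-\lambda t}\d t;L^2(\d v\d x/M))$ control. The $L^2$-bound on $\rho_\eps$ in (ii) then follows from the weighted Cauchy--Schwarz inequality $\rho_\eps^2(x,t)\le\int\feps^2/M\,\d v$, which implies $\|\rho_\eps(t)\|^2_{L^2(\d x)}\le\|\feps(t)\|^2_{L^2(\d v\d x/M)}$.

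For (iii), I would instead keep the dissipation term and rearrange \eqref{L2-est} into a differential inequality for $e^{-2\lambda t}E_\eps$. A direct product-rule computation yields
\begin{equation*}
  \frac{\d}{\d t}\bigl(e^{-2\lambda t}E_\eps\bigr) \le -\frac{2\nu}{\eps^\alpha}\,e^{-2\lambda t}\|\feps-\rho_\eps F_\eps\|^2_{L^2(\d v\d x/F_\eps)},
\end{equation*}
so integration from $0$ to any $T>0$, together with the uniform bound on $E_\eps(0)$ provided by Remark (a) and the assumption $f^{in}\in L^2(\d v\d x/M)$, gives $\int_0^T e^{-2\lambda t}\|\feps-\rho_\eps F_\eps\|^2\,\d t\le C\eps^\alpha$. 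To pass from $\feps-\rho_\eps F_\eps$ to $r_\eps$, I would use the splitting $\feps-\rho_\eps M=(\feps-\rho_\eps F_\eps)+\rho_\eps(F_\eps-M)$. Multiplied by $\eps^{1-\alpha}$, the first piece has weighted $L^2_tL^2(\d v\d x/M)$ norm $O(\eps^{1-\alpha/2})$ by the estimate just derived, while the second is pointwise controlled by $\mu_3\rho_\eps M$ via Remark (b), so its $L^2(\d v\d x/M)$-norm is $\le\mu_3\|\rho_\eps\|_{L^2(\d x)}$, already controlled by part (ii).

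The only real subtlety is the role of the weight $e^{-\lambda t}$. Under Assumptions A and B one has $\lambda=0$ and the weights trivialize, giving bounds that are genuinely uniform on $(0,\infty)$; under Assumption C the weight is needed precisely to cancel the exponential growth of $E_\eps$ coming from the $(x,t)$-dependence of $F_\eps$ through \eqref{F-eps-deriv}, and the resulting estimates are uniform in $\eps$ on any finite time interval, which is all that is needed for the macroscopic limit in the next sections. Beyond this, the proof is routine bookkeeping with Gronwall's inequality and the pointwise equivalences supplied by Remarks (a) and (b).
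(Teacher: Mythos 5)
Your proof follows essentially the same route as the paper: mass conservation for the $L^1$ bounds, Gronwall applied to \eqref{L2-est} with the dissipation term dropped (plus the equivalence $F_\eps\sim M$) for the weighted $L^2$ bound and Cauchy--Schwarz for $\rho_\eps$, then integrating \eqref{L2-est} with the dissipation kept and splitting $f_\eps-\rho_\eps M$ via Remark 1 (b) exactly as in the paper, with the exponent count $\eps^{1-\alpha}\eps^{\alpha/2}=\eps^{1-\alpha/2}$ matching the paper's remark that $\eps^{\alpha/2}<\eps^{\alpha-1}$. The one point where you are no more precise than the original is the control of the remaining piece $\mu_3\rho_\eps M$ in the $L^2(e^{-2\lambda t}\d t;L^2(\d v\d x/M))$ norm via the $L^\infty$-in-time bound of part (ii), but this is exactly the level of detail at which the paper itself argues.
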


\begin{remark}
By uniform boundedness of $f_\eps$ in $L^\infty\left( e^{-\lambda t}\d t; L^2 (\d v \d x/M)\right)$, we mean that
$e^{-\lambda t}\|f_\eps\|_{L^2 (\d v \d x/M)}$ is bounded uniformly in $\eps$ and in $t\in\R^+$.
\end{remark}

\begin{proof}
The first result (i) is a consequence of the Gronwall lemma, after neglecting the last term in \eqref{L2-est} and of the conservation of
total mass. Note that by Remark 1 (a) the weights $1/M$ and $1/F_\eps$ are equivalent. Then (ii) follows from the inequality
$$
  \rho_\eps \le \|f_\eps\|_{L^2(dv/M)} \,,
$$
derived from the Cauchy-Schwarz inequality and the normalization of $M$. Finally, (iii) is a consequence of \eqref{L2-est}
after integration with respect to $t$, using
$$
  |f_\eps - \rho_\eps M| \le |f_\eps - \rho_\eps F_\eps| + \rho_\eps |F_\eps - M| \le |f_\eps - \rho_\eps F_\eps| 
  + \eps^{\alpha-1} \mu_3 \rho_\eps M \,,
$$
by Remark 1 (b). Note that $\eps^{\alpha/2} < \eps^{\alpha-1}$.
\end{proof}

Since $M$ has moments of any order smaller than $\alpha$, existence of these moments is propagated by the kinetic equation.
We shall need the first order moment:

\begin{lemma}\label{momentGe}
Let the assumptions of Theorem \ref{Theorem1} and Proposition 1 be satisfied and let $\eps$ be small enough. Then 
$\int\int |v|f_\eps \d v\d x$ is bounded uniformly with respect to $t$ and $\eps$.
\end{lemma}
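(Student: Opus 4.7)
The plan is to test the kinetic equation against the weight $|v|$ and extract an ODE for $G_\eps(t) := \int\int |v|\,f_\eps\,\d v\,\d x$. After multiplying \eqref{kineticeq} by $|v|$ and integrating over $(v,x)$, the transport contribution $\eps\int\int |v|\,v\cdot\nabla_x f_\eps\,\d v\,\d x$ is a pure spatial divergence and vanishes, yielding
$$\eps^\alpha G_\eps'(t) = \int\int |v|\,Q_0(f_\eps)\,\d v\,\d x + \eps^{\alpha-1}\int\int |v|\,Q_1(f_\eps)\,\d v\,\d x.$$
Because $\alpha>1$, the equilibrium has a finite first moment $m := \int |v|\,M\,\d v$, so mass conservation together with the identity $Q_0(f) = \rho_f M - f$ gives $\int\int |v|\,Q_0(f_\eps)\,\d v\,\d x = m\,\|f^{in}\|_{L^1} - G_\eps(t)$.

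The key step is the uniform bound $\bigl|\int\int |v|\,Q_1(f_\eps)\,\d v\,\d x\bigr|\le C\,\|f^{in}\|_{L^1}$. Splitting $Q_1$ into its gain and loss pieces and applying Fubini, the gain piece becomes $\int\int f_\eps(v')\bigl[\int |v|\,\Phi(v,v',c)\,M(v)\,\d v\bigr]\,\d v'\,\d x$, whose inner bracket is bounded under Assumptions A and C by $\|(1+|v|)\Phi\|_\infty$ and under Assumption B by $\|\Phi\|_\infty\,m$. The loss piece is $\int\int |v|\,f_\eps(v)\bigl[\int \Phi(v',v,c)\,M'\,\d v'\bigr]\,\d v\,\d x$; under Assumptions A and C the bound $(1+|v|+|v'|)|\Phi|\le C$ forces $|v|\int\Phi(v',v,c)\,M'\,\d v'\le C$ uniformly in $v$, while under Assumption B the bracket vanishes identically. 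In all three cases the $Q_1$-term is absorbed by $\rho_{f_\eps}$ against bounded weights and so by $\|f^{in}\|_{L^1}$.

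Assembling the pieces produces the scalar inequality $\eps^\alpha G_\eps'(t) + G_\eps(t)\le K$ with $K := (m + \eps^{\alpha-1}C)\|f^{in}\|_{L^1}$, and an integrating-factor argument gives $G_\eps(t)\le G_\eps(0) + K$. The initial datum is controlled by $(1+|v|)f^{in}\in L^1(\d v\,\d x)$, so $G_\eps(0)$ is finite and $\eps$-independent, and $K$ is bounded uniformly for $\eps$ small; hence $G_\eps$ is uniformly bounded in $(\eps,t)$.

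The main obstacle is the loss piece of $Q_1$ under Assumption B, where the factor $|v|$ cannot be absorbed by $\Phi$: only the structural identity $\int\Phi(v',v,c)\,M'\,\d v' = 0$ saves the argument. This is precisely the role of the three-case split in Theorem \ref{Theorem1}, each alternative encoding a different way of controlling the $|v|$-weighted first moments of $\Phi$ against $M$.
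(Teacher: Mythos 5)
Your proof is correct and follows the same basic strategy as the paper: multiply \eqref{kineticeq} by $|v|$, integrate in $(x,v)$ (the transport term drops out as a divergence), obtain a damped ODE inequality for $G_\eps(t)=\int\int|v|f_\eps\,\d v\,\d x$, and conclude by an integrating factor, with $G_\eps(0)$ finite by the assumption $(1+|v|)f^{in}\in L^1$. Where you differ is in the treatment of the collision term: the paper does not split $Q_\eps$ into $Q_0+\eps^{\alpha-1}Q_1$, but works directly with the gain--loss form of $Q_\eps$, using only that $1+\eps^{\alpha-1}\Phi$ is bounded above and away from zero for $\eps$ small (true under all three assumptions, since each of A, B, C implies $\Phi$ bounded). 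This immediately gives the gain term $\le C_1$ (mass conservation plus the finite first moment of $M$) and the loss term $\ge C_2\,G_\eps$ (nonnegativity of $f_\eps$), with no case distinction at all. Your decomposition is valid, but your closing remark overstates the role of the cancellation condition in Assumption B: even without $\int\Phi(v',v,c)M'\,\d v'=0$, the loss piece of $\eps^{\alpha-1}Q_1$ is bounded in modulus by $\eps^{\alpha-1}\bar\Phi\,G_\eps(t)$ and is simply absorbed, for $\eps$ small, into the $-G_\eps$ damping coming from the loss part of $Q_0$; boundedness of $\Phi$ alone suffices, which is exactly why the paper's one-line estimate needs no three-case split. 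So your argument buys nothing extra here except a bound on the $Q_1$ contribution that is independent of $G_\eps$, at the price of a case analysis the lemma does not actually require.
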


\begin{proof}
Since for $\eps$ small enough, $1+\eps^{\alpha-1}\Phi$ is uniformly bounded from above and away from zero, after multiplication of
\eqref{kineticeq} by $| v|$ and integration with respect to $x$ and $v$, we estimate
\[
  \eps^\alpha \frac{\d}{\d t} \int\int |v| f_\eps \d v \d x  \le C_1 - C_2 \int\int |v| f_\eps \d v \d x \,,
\]
implying 
\[
  \int\int |v| f_\eps \d v \d x \le \max\left\{ \frac{C_1}{C_2}, \int\int |v| f^{in} \d v \d x \right\} \,.
\]
\end{proof}

\section{Rigorous asymptotics for constant $c$}

In this section, we shall prove Theorem \ref{Theorem1} under Assumption A or B, following the strategy of \cite{MR2763032}.  
Analogously to the derivation of \eqref{rho_eps} in Section \ref{sec:formal}, Fourier-Laplace transformation of (\ref{kineticeq}) yields
\(
  &&\widehat{\rho_{\varepsilon}} \int \frac{p + \eps^{\alpha}p^2 + \eps^{2-\alpha} (v\cdot k)^2}
  {(1+\eps^\alpha p)^2 + \eps^2 (v\cdot k)^2}M\d v 
  - \int \frac{\mathcal{F} f^{in}}{1 + \varepsilon^\alpha p + \varepsilon i v \cdot k} \d v  \nonumber\\
  && = \frac{1}{\eps}
  \int \frac{Q_1\left(\widehat{ f_\eps}\right)}{1 + \varepsilon^\alpha p + \varepsilon i v \cdot k}\d v \,.\label{rho_eps1}
\)
The rigorous passage to the limit in the left hand side of this equation has already been carried out in \cite{MR2763032}. 
For completeness, we repeat the essential arguments and start with the second term, whose convergence as $\eps\to 0$ to 
$\mathcal{F} \rho^{in}$ follows from the dominated convergence theorem, noting 
$|1 + \varepsilon^\alpha p + \varepsilon i v \cdot k|\ge 1$ and
$$
   |\mathcal{F} f^{in}| \le \int f^{in} \d x \in L^1(\d v) \,.
$$
The dominated convergence theorem also implies
$$
   \lim_{\eps\to 0} \int \frac{p + \eps^{\alpha}p^2 }
  {(1+\eps^\alpha p)^2 + \eps^2 (v\cdot k)^2}M\d v = p \,,\qquad\forall\, p>0\,,\ k\in\R^N \,.
$$
Furthermore we have
$$
   \int_{|v|<1} \frac{\eps^{2-\alpha} (v\cdot k)^2}
  {(1+\eps^\alpha p)^2 + \eps^2 (v\cdot k)^2}M\d v \le \eps^{2-\alpha} |k|^2 \to 0 \quad\mbox{as } \eps\to 0 \,.
$$
In the integral over $|v|>1$ we use \eqref{eqdecay} and carry out the coordinate transformation $v = (w_1 k/|k| + w^\bot)/(\eps |k|)$ 
(a stretching and a rotation): 
$$
   \int_{|v|>1} \frac{\eps^{2-\alpha} (v\cdot k)^2}
  {(1+\eps^\alpha p)^2 + \eps^2 (v\cdot k)^2}M\d v 
  = \gamma |k|^\alpha \int_{|w|>\eps|k|} \frac{w_1^2 |w|^{-N-\alpha}}{(1+\eps^\alpha p)^2+w_1^2} \d w
$$
Again by dominated convergence, the right hand side converges as $\eps\to 0$ for all $p>0$, $k\in\R^N$, to 
$$
  A |k|^\alpha \,,\qquad\mbox{with } A = \gamma\int_{\R^N} \frac{w_1^2 |w|^{-N-\alpha}}{1+w_1^2} \d w > 0 \,.
$$
As a consequence of Theorem \ref{apriorieq1}, there exists $\rho\in L^\infty \left( \d t; L^2(dx) \right) \cap
L^\infty \left( \d t; L^1(dx)  \right)$,
such that 
$$
  \rho_\eps\rightharpoonup\rho \qquad\mbox{in } L^\infty \left( \d t; L^2(dx) \right) \mbox{ weak*.}
$$
Since 
$$
  \left| \widehat{\rho_\eps}(k,p)\right| \le \frac{1}{p} \|\rho_\eps\|_{L^\infty \left( \d t; L^1(dx)  \right)} \,,
$$
$\widehat{\rho_\eps}$ is uniformly bounded in $L^\infty\left((a,\infty)\times\R^N\right)$ for $a>0$, implying
\begin{equation} \label{hatr-conv}
  \widehat{\rho_\eps} \rightharpoonup \widehat\rho \qquad\mbox{in } L^\infty\left((a,\infty)\times\R^N\right) \mbox{ weak*.}
\end{equation}
Our results so far imply distributional convergence of the left hand side of \eqref{rho_eps1} to 
$$
  (p+A|k|^\alpha)\widehat\rho - \mathcal{F} \rho^{in} \,.
$$
Now we turn to the right hand side and observe that due to mass conservation $\left(\int Q_1(f)\d v = 0\right)$
and with the notation of Theorem \ref{apriorieq1} (iii)
\begin{eqnarray}
  &&\frac{1}{\eps} \int \frac{Q_1\left(\widehat{ f_\eps}\right)}{1 + \varepsilon^\alpha p + \varepsilon i v \cdot k}\d v
   = - \frac{ik}{1+\eps^\alpha p} 
   \cdot \int \frac{v Q_1\left(\widehat{ f_\eps}\right)}{1 + \varepsilon^\alpha p + \varepsilon i v \cdot k}\d v \nonumber\\
   &&= - \frac{ik}{1+\eps^\alpha p} 
   \cdot \left(\widehat{\rho_\eps}\int \frac{v Q_1(M)}{1 + \varepsilon^\alpha p + \varepsilon i v \cdot k}\d v +
   \eps^{\alpha-1}\int \frac{v Q_1\left(\widehat{ r_\eps}\right)}{1 + \varepsilon^\alpha p + \varepsilon i v \cdot k}\d v\right)
   \label{Q1-term}
\end{eqnarray}
holds. With $\overline\Phi := \sup_{v,v^\prime,c} \Phi$, it is straightforward to show $|Q_1(M)| \le \overline\Phi M$. Since the first
order moments of $M$ are finite, the dominated convergence theorem implies
$$
  \lim_{\eps\to 0} \int \frac{v Q_1(M)}{1 + \varepsilon^\alpha p + \varepsilon i v \cdot k}\d v = \int v Q_1(M) \d v = u(c)\,.
$$
For the last integral in \eqref{Q1-term}, we start with the case of Assumption B (satisfied by the example treated in Section \ref{sec:formal}), whence $Q_1(f) = M\int \Phi(v,v^\prime,c)f^\prime \d v^\prime$. This implies
\begin{equation} \label{Q1-est}
  |Q_1(\widehat{r_\eps})| \le \overline\Phi M  \int |{\widehat{r_\eps}}^\prime| \d v^\prime \le
  \overline\Phi M \|\widehat{r_\eps} \|_{L^2(\d v/M)}
\end{equation}
such that
$$
  \left| \int \frac{v Q_1\left(\widehat{ r_\eps}\right)}{1 + \varepsilon^\alpha p + \varepsilon i v \cdot k}\d v \right| \le 
  \overline\Phi \int |v|M\d v\, \|\widehat{r_\eps} \|_{L^2(\d v/M)}
$$
is, by Theorem \ref{apriorieq1}, uniformly bounded in $L^\infty((a,\infty); L^2(\d k))$ by the estimate (using the Cauchy-Schwarz
inequality and the Plancherel identity)
\begin{align}
  \| \widehat{r_\varepsilon}\|^2_{L^\infty ( ( a, \infty); L^2 ( \d k \d v/M))} 
  &= \sup_{ p \geq a} \int \int \bigg| \int_0^\infty \e^{ -p t} \mathcal{F} r_\varepsilon \d t \bigg|^2 \frac{ \d k \d v}{ M}  \nonumber \\
  & \leq \sup_{ p \geq a} \int \int \left( \int_0^\infty \e^{ - 2pt} \d t \right) 
      \left( \int_0^\infty | \mathcal{F} r_\varepsilon|^2 \d t \right) \frac{ \d k \d v}{ M} \nonumber \\
  & = \frac{1}{2 a} \|\mathcal{F} r_\varepsilon\|^2_{L^2(\d t\d k\d v/M)} = C\|r_\varepsilon\|^2_{L^2(\d t\d x\d v/M)} \,.\label{hatr-est}
\end{align}
In the case of Assumption A, i.e. $\sup_{v,v^\prime,c}(|v| + |v^\prime|)\Phi(v,v^\prime,c) =: \overline\Phi_1 <\infty$, we estimate
$$
  |vQ_1(\widehat{r_\eps})| \le \overline\Phi_1 \left( M\int |\widehat{r_\eps}^\prime| \d v^\prime + |\widehat{r_\eps}| \right)
$$
which, by \eqref{hatr-est} and by an estimate like in \eqref{Q1-est}, is uniformly bounded in $L^\infty ( ( a, \infty); L^2 ( \d k \d v/M))$.
Thus, under both Assumptions A and B, the last term in \eqref{Q1-term} is $O(\eps^{\alpha-1})$ in 
$L^\infty ( ( a, \infty); L^2 ( \d k \d v/M))$.

Finally, using again \eqref{hatr-conv}, we can pass to the limit also in the right hand side of \eqref{rho_eps1} and obtain
$$
(p+ ik\cdot u(c)+A|k|^\alpha )\widehat\rho = \mathcal{F} \rho^{in} \,,
$$
the Fourier-Laplace transform of \eqref{macro}, concluding the proof.

\section{Rigorous asymptotics for non-constant $c$}

In this section we use a completely different approach, introduced by Mellet \cite{MR2815035} and called the \emph{moment method}. 
For a test function $\phi(x,t) \in C^\infty_0( \R^N \times [0,\infty))$, we denote by $\chi_\varepsilon(x,v,t)$ the unique bounded solution of the auxiliary equation
\begin{equation}\label{auxiliaryeq1}
\chi_\varepsilon - \varepsilon v \cdot \nabla_x \chi_\varepsilon = \phi \,,
\end{equation}
which can be computed explicitly via the method of characteristics:
\begin{equation}\label{explicitaux}
\chi_\varepsilon ( x, v, t) = \int_0^\infty \e^{-z} \phi ( x + \varepsilon v z, t) \d z \,. 
\end{equation}
The operator on the left hand side of \eqref{auxiliaryeq1} is the adjoint of a part of the operator appearing in \eqref{kineticeq},
consisting only of the transport operator and of the loss term of the leading order collision operator $Q_0$.
Some properties of $\chi_\eps$ are collected in the following lemma, mostly proven already in \cite{MR2815035}.

\begin{lemma}\label{unifconveq1}
Let $\varphi \in \D ( \R^N \times [0,\infty))$, and let $\chi_\eps$ be defined by \eqref{explicitaux}. Then
$\chi_\eps$, $\partial_t\chi_\eps$, and $\nabla_x\chi_\eps$ are bounded in $L^\infty(\d v\d x\d t)$ and in $L^2(M\d v\d x\d t)$
uniformly in $\eps$. Furthermore
$$
  \int M|\chi_\eps-\phi|\d v\,,\, \int M|\partial_t\chi_\eps-\partial_t\phi|\d v\,,\, \int M|\nabla_x\chi_\eps-\nabla_x\phi|\d v 
  = O(\eps) \,,
$$
uniformly in $x$ and $t$.
\end{lemma}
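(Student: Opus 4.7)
The plan is to base all six estimates on the explicit representation \eqref{explicitaux} and the fact that $\e^{-z}\d z$ is a probability measure on $[0,\infty)$ with $\int_0^\infty z\,\e^{-z}\d z = 1$. Since $\phi \in \D$ is smooth with all derivatives bounded and compactly supported, differentiation under the integral sign is justified and gives
$$
\partial_t\chi_\eps(x,v,t) = \int_0^\infty \e^{-z}\partial_t\phi(x+\eps v z,t)\d z,\qquad
\nabla_x\chi_\eps(x,v,t) = \int_0^\infty \e^{-z}\nabla_x\phi(x+\eps v z,t)\d z.
$$
Hence it suffices to carry out the arguments for $\chi_\eps$ itself and afterwards to apply them with $\phi$ replaced by $\partial_t\phi$ or by the components of $\nabla_x\phi$.

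The $L^\infty(\d v\d x\d t)$-bound is immediate from $|\chi_\eps|\le \|\phi\|_{L^\infty}\int_0^\infty \e^{-z}\d z = \|\phi\|_{L^\infty}$, uniformly in $\eps$. For the $L^2(M\d v\d x\d t)$-bound I would apply Cauchy--Schwarz against the probability measure $\e^{-z}\d z$ (equivalently Jensen's inequality) to obtain
$$
\chi_\eps^2(x,v,t) \le \int_0^\infty \e^{-z}\phi^2(x+\eps v z,t)\d z,
$$
and then perform the change of variables $y = x + \eps v z$ at fixed $v,z,t$ (Jacobian $1$) in the $\d x$-integral. The $\d x$-integral then produces $\|\phi(\cdot,t)\|_{L^2(\d x)}^2$, independently of $v,z,\eps$, and using $\int M\d v = 1$ together with $\int_0^\infty \e^{-z}\d z = 1$ yields
$$
\|\chi_\eps\|_{L^2(M\d v\d x\d t)}^2 \le \|\phi\|_{L^2(\d x\d t)}^2,
$$
uniformly in $\eps$.

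For the $O(\eps)$ convergence statements, I would exploit that $\phi \in \D$ is globally Lipschitz, giving the pointwise bound $|\phi(x+\eps v z,t)-\phi(x,t)| \le \eps z |v|\,\|\nabla_x\phi\|_{L^\infty}$. Inserting this in
$$
\chi_\eps(x,v,t) - \phi(x,t) = \int_0^\infty \e^{-z}\bigl[\phi(x+\eps v z,t) - \phi(x,t)\bigr]\d z
$$
and using $\int_0^\infty z\,\e^{-z}\d z = 1$ produces the pointwise estimate
$$
|\chi_\eps(x,v,t) - \phi(x,t)| \le \eps |v|\,\|\nabla_x\phi\|_{L^\infty},
$$
from which
$$
\int M|\chi_\eps - \phi|\d v \le \eps \|\nabla_x\phi\|_{L^\infty} \int |v| M\d v = O(\eps)
$$
uniformly in $(x,t)$. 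The remaining two convergence estimates follow from the same computation with $\phi$ replaced by $\partial_t\phi$ or by $\nabla_x\phi$, since these are themselves smooth, bounded and compactly supported.

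I do not anticipate a genuine obstacle here: the proof consists of Fubini, one translation change of variables, and a first-order Taylor estimate. The only quantitative input beyond bookkeeping is the finiteness of $\int |v|M\d v$ in the last display, which is guaranteed by the assumption $\alpha > 1$ in \eqref{eqdecay} and has already been used in Lemma \ref{momentGe}.
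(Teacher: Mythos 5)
Your proposal is correct and follows essentially the same route as the paper: the $L^\infty$ bound from the probability measure $\e^{-z}\d z$, the $L^2(M\d v\d x\d t)$ bound via Jensen/Cauchy--Schwarz and a translation change of variables in $x$, and the $O(\eps)$ estimates from the Lipschitz bound on $\phi$ together with $\int_0^\infty z\e^{-z}\d z=1$ and the finiteness of $\int |v|M\d v$. The treatment of $\partial_t\chi_\eps$ and $\nabla_x\chi_\eps$ by repeating the argument with $\phi$ replaced by its derivatives is exactly what the paper does.
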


\begin{proof}
The boundedness statements in $L^\infty$ are a straightforward consequence of the boundedness of $\phi$ and of its derivatives. 
Because of $\int_0^\infty e^{-z}\d z = 1$,
$$
  \chi_\eps(x,v,t)^2 \le \int_0^\infty e^{-z} \phi(x+\eps vz,t)^2 \d z 
$$
holds, and therefore
$$
  \|\chi_\eps\|_{L^2(M\d v\d x\d t)}^2 \le \int_0^\infty \int\int \int_0^\infty M(v)e^{-z}\phi(x+\eps vz,t)^2 \d z\d v\d x\d t
  = \|\phi\|_{L^2(\d x\d t)}^2 \,,
$$
and the same argument for the derivatives.

On the other hand, with the Lipschitz constant $L$ of $\phi$,
\begin{eqnarray}
  \int M|\chi_\eps-\phi|\d v  &=& \int M \left|\int_0^\infty e^{-z}(\phi(x+\eps vz,t) -\phi(x,t))\d z\right| \d v \nonumber\\
  &\le&  \eps L \int |v|M\d v \int_0^\infty ze^{-z}\d z \,,\label{chi2phi}
\end{eqnarray}
implying the desired result by the finiteness of the first order moments of $M$. The proof of the remaining two statements is analogous.
\end{proof}

Multiplication of the kinetic equation (\ref{kineticeq}) by $\chi_\eps$, integration with respect to $x$, $v$, and $t$, and using
\eqref{auxiliaryeq1}, gives
\begin{align}
&  \int_0^\infty \int\int f_\eps \partial_t \chi_\eps \d v\d x\d t + \int\int f^{ in} \chi_\eps (t=0) \d v\d x 
 + \eps^{-\alpha} \int_0^\infty \int\int \rho_\eps M (\chi_\eps - \phi)\d v\d x\d t  \nonumber \\ 
& = - \frac{1}{\eps} \int_0^\infty \int\int Q_1(f_\eps)\chi_\eps\d v\d x\d t \label{basicGe}
\end{align} 
The rest of the proof is concerned with the passage to the limit $\eps\to 0$ in each of the terms of \eqref{basicGe}.  
Similarly to the preceding section, we outline the arguments for the terms on the left hand side, which have already been
treated in \cite{MR2815035}.

Rewriting the first term on the left hand side leads to 
$$
    \int_0^\infty \int\int f_\eps \partial_t \phi \d v\d x\d t 
    + \int_0^\infty \int\int f_\eps (\partial_t \chi_\eps -\partial_t \phi)\d v\d x\d t \to
    \int_0^\infty \int\rho\, \partial_t \phi \d x\d t \,,
$$
as $\eps\to0$, where $f_\eps \to \rho M$ in the sense of distibutions as a consequence of Theorem \ref{apriorieq1}.
The second term above vanishes in the limit by an argument analogously to \eqref{chi2phi}, since, by Lemma \ref{momentGe},
$f_\eps$ has first order moments in $v$, integrable with respect to $x$ and bounded in $t$. In the same way
$$
  \lim_{\eps\to 0} \int\int f^{ in} \chi_\eps (t=0) \d v\d x = \int\rho^{ in} \phi(t=0) \d x
$$
is proven, using the integrability with respect to $x$ of the first $v$-moments of $f^{in}$, as assumed in Theorem \ref{Theorem1}.

The third term in \eqref{basicGe} leads to the fractional diffusion operator. By the rotational symmetry of $M$, we have
\begin{equation*}
    \eps^{-\alpha} \int M (\chi_\eps - \phi)\d v 
    = \eps^{-\alpha} \int \int_0^\infty M e^{-z}(\phi(x+\eps vz) - \phi(x) - \eps vz\cdot\nabla_x\phi(x)) \d z\d v \,.
\end{equation*}
This implies
$$
  \left|\eps^{-\alpha} \int_{|v|<1} M (\chi_\eps - \phi)\d v\right| 
  \le \eps^{2-\alpha} C \int_{|v|<1} |v|^2M\d v \int_0^\infty z^2e^{-z}\d z \,.
$$
In the integral over $|v|>1$, we introduce the coordinate transformation $v\leftrightarrow w=\eps vz$ to obtain
\begin{eqnarray*}
    &&\eps^{-\alpha} \int_{|v|>1} M (\chi_\eps - \phi)\d v  
 = \gamma\int_0^\infty\int_{|w|>\eps z}  z^\alpha e^{-z} \frac{\phi(x+w) - \phi(x) - w\cdot\nabla_x\phi(x)}{|w|^{N+\alpha}} \d w\d z \\  
 && \to -A(-\Delta)^{\alpha/2} \phi \,.
\end{eqnarray*}
The limit is uniform in $x$ and $t$, due to the estimate
\begin{eqnarray*}
  &&\left|\int_0^\infty\int_{|w|<\eps z}  z^\alpha e^{-z} \frac{\phi(x+w) - \phi(x) - w\cdot\nabla_x\phi(x)}{|w|^{N+\alpha}} 
    \d w\d z\right| \\
  &&\le C \int_0^\infty\int_{|w|<\eps z} z^\alpha e^{-z} |w|^{2-N-\alpha}\d w\d z \le \eps^{2-\alpha}C \int_0^\infty z^2 e^{-z}\d z \,.
\end{eqnarray*}
As a consequence, uniform integrability and weak convergence of $\rho_\eps$ are sufficient for passing to the limit in the
third term of \eqref{basicGe}. Collecting our results so far, the left hand side of \eqref{basicGe} converges to 
$$
  \int_0^\infty \int\rho\left( \partial_t \phi - (-\Delta)^{\alpha/2} \phi\right)\d x\d t + \int\rho^{ in} \phi(t=0) \d x \,.
$$
Finally we consider the right hand side of \eqref{basicGe}, and use the mass conservation property of $Q_1$, the properties of
$\chi_\eps$, and the macro-micro decomposition of $f_\eps$:
\begin{eqnarray}
  &&\frac{1}{\eps} \int Q_1(f_\eps)\chi_\eps\d v = \int Q_1(f_\eps)v\cdot\nabla_x\chi_\eps\d v \nonumber\\
  &&= \rho_\eps u(c)\cdot\nabla_x\phi
  + \rho_\eps \int Q_1(M)v\cdot(\nabla_x \chi_\eps - \nabla_x\phi)\d v + \eps^{\alpha-1}\int Q_1(r_\eps)v\cdot\nabla_x\chi_\eps \d v
  \label{Q1-est2}
\end{eqnarray}
After integration with respect to $x$ and $t$, we can pass to the limit in the first term on the right hand side by the weak
convergence of $\rho_\eps$. By Assumption C we can use again
$\sup_{v,v^\prime,c}(|v| + |v^\prime|)\Phi(v,v^\prime,c) = \overline\Phi_1 <\infty$ to obtain 
\begin{equation}\label{Q1-est3}
  |Q_1(f)v| \le \overline\Phi_1 \left( M\int |f^\prime|\d v^\prime + |f|\right) \,.
\end{equation}
In particular, the consequence $|Q_1(M)v| \le 2\overline\Phi_1 M$ implies by Lemma \ref{unifconveq1} that the integral in the
second term on the right hand side of \eqref{Q1-est2} is $O(\eps)$ uniformly in $x$ and $t$. For the last term in \eqref{Q1-est2}
we have, after integration with respect to $x$ and $t$, by \eqref{Q1-est3}, by the Cauchy-Schwarz inequality, and by Lemma \ref{unifconveq1}
\begin{eqnarray*}
  &&\left|\int_0^\infty \int\int Q_1(r_\eps)v\cdot\nabla_x\chi_\eps \d v\d x\d t\right| \\
  &&\le C \left(\left\| M\int |r_\eps^\prime|\d v^\prime \right\|_{L^2((0,T); L^2(\d v\d x/M))} + \| r_\eps\|_{L^2((0,T); L^2(\d v\d x/M))}\right) \\
  &&\le 2C \| r_\eps\|_{L^2((0,T); L^2(\d v\d x/M))} \,,
\end{eqnarray*}
where $T<\infty$ denotes an upper bound for $t$ in the support of $\phi$. The right hand side is uniformly bounded by 
Theorem \ref{apriorieq1}.
Combining our results, the limit of \eqref{basicGe} as $\eps\to 0$ reads
$$
  \int_0^\infty \int\rho\left( \partial_t \phi + u(c)\cdot\nabla_x\phi - (-\Delta)^{\alpha/2} \phi\right)\d x\d t + \int\rho^{ in} \phi(t=0) \d x = 0\,,
$$
which is the distributional formulation of \eqref{macro}.

\bibliographystyle{siam}

\bibliography{bibliography.bib}

\end{document}